\newtheorem{thm}{Theorem}[section]
\newtheorem{lem}[thm]{Lemma}
\newtheorem{defn}[thm]{Definition}
\theoremstyle{definition}
\numberwithin{equation}{section}
\theoremstyle{remark} \hsize=7.5truein \vsize=8.6truein
\def\R{{\hbox{\bf R}}}
\def\C{{\hbox{\bf C}}}
\def\P{{\hbox{\bf P}}}
\def\E{{\hbox{\bf E}}}
\def\be{{\bf{e}}}
\font \roman = cmr10 at 10 true pt
\def\ts{{T_\sigma}}
\def\be#1{ \begin{equation}\label{#1} }
\def\bas{\begin{align*}}
\def\eas{\end{align*}}
\def\bi{\begin{itemize}}
\def\ei{\end{itemize}}
\def\supp{{\hbox{\roman supp}}}
\def\re{{\hbox{\roman \bf Re}}}
\def\im{{\hbox{\roman\bf Im}}}
\def\Z{{\hbox{\bf Z}}}
\def\to{\rightarrow}
\def\emph#1{{\it #1}}
\def\textbf#1{{\bf #1}}
\def\Bu{{\mathbf u}}
\def\ts{{\tilde s}}
\def\tnu{{\tilde \nu}}
\def\txi{{\tilde \xi}}
\def\lam{{\lambda}}
\def\Var{{\hbox{\roman\bf Var}}}
\theoremstyle{plain}
\theoremstyle{remark}
  \newtheorem{remark}[subsection]{\bf Remark}
\theoremstyle{definition}
\begin{document}
\title{Local law for eigenvalues of random Hermitian matrices with external source }

\author{Linh Tran \\
Department of Mathematics, University of Washington}
\date{}
\maketitle

\begin{abstract}
We prove a local law for eigenvalues of the random Hermitian matrices with external source $W_n=\frac{1}{n}X_n+A_n$ where $X_n$ is Wigner matrix and $A_n$ is diagonal matrix with only two values $a, -a$ on the diagonal. The local law is an essential step to prove the universality conjecture for this random matrix model.
\end{abstract}

\section{Introduction}
Investigating the convergence of eigenvalue distribution  of Hermitian random matrix models is one of the clasic questions in random matrix theory. A typical example is Wigner's semi circle law for Wigner Hermitian matrices.In addition to the question about global convergence of the spetral distribution i.e. convergence on the whole support, there is an important question concerning "local law" i.e. convergence on intervals of very short length. These local laws allow one to establish other critical results like delocalization of eigenvector and they serve as fundamental steps in the proof of the universality conjecture. This conjecture states that the behavior of a random matrix model depends more on the algebraic structure of the matrix rather than the distribution of its entries. If one replace the entry's distribution by another with the same moments then the behavior more or less stays the same. There are many recent break through results on this universality conjecture by L. Erdos, B. Schlein, H-T Yau, T. Tao, V. Vu and others \cite{ESY_Local, EYYbulk, ESY_sine, TVUniBulk, TVUniEdge}.\\

In this short paper we prove a local convergence law for the spectral distribution of the random Hermitian matrix with external source model. The model is obtain by adding a deterministic pertubation matrix to the classical Wigner Hermitian matrix. This is an attempt toward proving the universality conjecture for this model.

\begin{defn}[Wigner matrix] A Wigner Hermitian matrix of size $n$ is a random Hermitian matrix $X_n$ which satisfies 
\begin{itemize}
\item The upper triangular complex entries $\zeta_{ij}=\xi_{ij}+\sqrt{-1}\tau_{ij}\ (1\le i<j\le n)$ where $\xi_{ij}$, $\tau_{ij}$ are iid copies of a real random varible $\xi$ following a probability measure $\nu$ with mean zero and variance $1/2$.
\item The diagonal real entries $\xi_{ii}\ (1\le i\le n)$ are iid copies of a real random variable $\txi$ following a probability measure $\tnu$ with mean zero and variance $1$.
\item $\xi,\ \txi$ have exponetial decay, i. e. there are constant $C,\ C'$ such that $\P(|\xi|\ge t^C)\le e^{-t}$ for all $t\ge C'$ (the same holds for $\txi$)
\end{itemize}
\end{defn}
The condition on exponential decay allows one to apply a standard truncation argument (see e.g. \cite{SBbook}) to get 
\begin{equation}\label{trunc.bnd} \sup_{1\le i,j\le n}|\zeta_{ij}|\log^{C+1}n 
\end{equation}
almost surely. Therefore we will assume (\ref{trunc.bnd}) for all of our proofs.\\

The {\it random matrix with external source} is defined as
 $$W_n=\frac{1}{\sqrt{n}}X_n+A_n,$$ 
where $X_n$ is a Wigner Hermitian matrix and $A_n$ is a deterministic matrix. Here we consider $A_n$ to be a diagonal matrix with only two values $a$ and $-a$ on the diagonal with the same multiplicity (so $n$ is even). Let $\lambda_1\le\lambda_2\dots\le\lambda_n$ be the real eigenvalues of $Wn$, the empirical spectral distribution (ESD) $\mu_n$ of $W_n$ is defined as
$$\mu_n=\frac{1}{n}\sum_{i=1}\delta_{\lambda_i},$$
where $\delta_\lambda$ is the Dirac point mass measure at point $\lambda$.\\

 In \cite{BK04} Bleher and Kuijlaars investigated convergence of the ESD of $W_n$ for the case $a>1$. 
\begin{thm}[Limiting ESD \cite{BK04}]\label{limESD} The measure $\mu_n$ converges in distribution to a limiting measure $\mu$ as $n$ goes to infinity. $\mu$ has a density function $\rho(x)$ which can be expressed as
$$\rho(x)=\frac{1}{\pi}|\im s(x)|,$$
where $s=s(x)$ solves the cubic Pastur's equation
$$ s^3-xs^2-(a^2-1)s+xa^2=0.$$
The density $\rho$ is real analytic on its support $(-z_1,-z_2)\cup(z_2,z_1)$ where $z_1, z_2$ are real numbers depending only on $a$.
\end{thm}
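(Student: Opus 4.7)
My plan is to prove this via the Stieltjes transform method. Define the Stieltjes transform of the ESD by $m_n(z) = \frac{1}{n}\operatorname{tr}(W_n - zI)^{-1}$ for $z$ in the upper half plane $\mathbb{H}$. Since the Stieltjes continuity theorem reduces weak convergence of $\mu_n$ to pointwise convergence of $m_n(z)$ on $\mathbb{H}$, and the density is recovered via $\rho(x) = \frac{1}{\pi}\lim_{\eta\downarrow 0}\operatorname{Im} m_n(x+i\eta)$, it suffices to show $m_n(z) \to m(z)$ in probability (or a.s., by the standard second-moment argument for sample Stieltjes transforms) where $m$ satisfies an appropriate fixed-point equation whose solution is exactly $s - z$ for $s$ a root of the Pastur cubic.

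First, I would derive the self-consistent equation. Writing $G(z) = (W_n - zI)^{-1}$ and applying the Schur complement formula to the $i$-th diagonal entry,
\begin{equation*}
G_{ii}(z) = \frac{1}{A_{ii} - z - \frac{1}{n}X_i^{(i)*}(W_n^{(i)} - zI)^{-1}X_i^{(i)}},
\end{equation*}
where $W_n^{(i)}$ is the matrix with the $i$-th row and column removed and $X_i^{(i)}$ is the corresponding off-diagonal column of $X_n$. By the classical concentration inequality for quadratic forms of independent entries (using the exponential decay / truncation bound \eqref{trunc.bnd}) together with interlacing of resolvents,
\begin{equation*}
\frac{1}{n}X_i^{(i)*}(W_n^{(i)} - zI)^{-1}X_i^{(i)} = m_n(z) + o(1)
\end{equation*}
with high probability, uniformly in $i$. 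Averaging $G_{ii}(z)$ over the two values $A_{ii} \in \{a,-a\}$ (each appearing $n/2$ times) yields
\begin{equation*}
m_n(z) = \tfrac{1}{2}\cdot\tfrac{1}{a-z-m_n(z)} + \tfrac{1}{2}\cdot\tfrac{1}{-a-z-m_n(z)} + o(1).
\end{equation*}

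A direct algebraic manipulation then shows that if $m(z)$ is the limit and we set $s := z + m(z)$, then
\begin{equation*}
m(s^2 - a^2) + s = 0 \quad \Longleftrightarrow \quad s^3 - z s^2 - (a^2-1)s + z a^2 = 0,
\end{equation*}
which is precisely Pastur's cubic with $x$ replaced by $z$. Among the (at most) three branches I would select the one with $\operatorname{Im} s(z) > 0$ on $\mathbb{H}$, which exists uniquely by the Nevanlinna property of $m_n$ (any subsequential limit is a Nevanlinna function with $m(z) = O(1/|z|)$ at infinity); this identifies the limit unambiguously and gives $\rho(x) = \frac{1}{\pi}|\operatorname{Im} s(x+i0)|$.

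The main obstacle I expect is twofold. The softer difficulty is the concentration-plus-stability step: showing that the fixed-point error $o(1)$ actually forces $m_n(z) \to m(z)$, which requires a stability analysis of the self-consistent equation at each $z \in \mathbb{H}$ (straightforward once $\operatorname{Im} z > 0$ is fixed but delicate near the spectrum). The genuinely harder step is the \emph{algebraic geometry of the cubic}: to identify the support $(-z_1,-z_2)\cup(z_2,z_1)$, I must analyze the discriminant of $s^3 - xs^2 - (a^2-1)s + xa^2$ as a polynomial in $s$ with parameter $x$, determining where it has a pair of complex conjugate roots (giving the density's support) versus three real roots (the gap). For $a>1$ the discriminant factors nicely and yields two intervals symmetric about the origin; one must then verify real-analyticity of $\rho$ on each component and square-root vanishing at the four edges, which is a routine but careful study of the branch structure of the algebraic function $s(x)$.
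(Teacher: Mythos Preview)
The paper does not prove this theorem: it is quoted as a background result from Bleher--Kuijlaars \cite{BK04}, with no argument given. So there is no ``paper's own proof'' to compare against.

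That said, your proposal is a correct and self-contained route to the statement, and it is worth noting that it is \emph{not} the route taken in \cite{BK04}. Bleher and Kuijlaars work in the Gaussian setting (GUE plus external source) and obtain the limiting density through the Riemann--Hilbert analysis of multiple orthogonal polynomials; the Pastur cubic emerges there from the algebraic curve underlying the steepest-descent deformation. Your approach---Schur complement, quadratic-form concentration, and the self-consistent equation
\[
m(z)=\tfrac{1}{2}\,\frac{1}{a-z-m(z)}+\tfrac{1}{2}\,\frac{1}{-a-z-m(z)},
\]
followed by the substitution $s=z+m(z)$---is the Pastur/free-probability derivation, and it has the advantage of working directly for general Wigner entries rather than only the Gaussian case. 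The algebra you wrote is correct: $(s-z)(s^2-a^2)+s=0$ is exactly $s^3-zs^2-(a^2-1)s+za^2=0$. The stability step (passing from ``$m_n$ nearly satisfies the fixed-point equation'' to ``$m_n\to m$'') is routine for fixed $z\in\mathbb H$ since the map is a contraction away from the real axis, and the support/analyticity claims do indeed reduce to the discriminant analysis you describe. One small slip: in your density formula you wrote $m_n$ where you meant the limit $m$.
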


During the paper we will assume the Wigner matrix $X_n$ satisfies the following condition on its atom distributions\\

{\bf Condition ${\bf C_0}$:}{\it
\begin{itemize}
\item The probability measure $\nu$ and $\tnu$ satisfy the Poincare inequality, which means there exists a constant $C$ so that for any function $u$
\begin{equation}\label{poincare}
\int|u-\int ud\nu|^2d\nu\le C\int|\nabla u|^2d\nu,
\end{equation}
and the same hold for $\tnu$.
\item  The probability measure $\nu$ and $\tnu$ satisfy the log Sobolev inequality, which means there exists a constant $C$ so that for any density function $u$ with $\int ud\nu=1$
\begin{equation}\label{logSob}
\int u\log ud\nu\le C\int|\nabla \sqrt{u}|^2d\nu,
\end{equation}
and the same hold for $\tnu$.
\end{itemize}
}
 
Our main result is
 \begin{thm}[Local law for ESD]\label{thmlocallaw} Let $W_n$  be a random Hermitian matrix with external source satisfying condition ${\bf C_0}$ and  $|\zeta_{ij}|\le K$ almost surely for all $i,\ j$. For any $\epsilon, \delta >0$ and any interval $I\subset\R$ of width $|I|\ge\frac{K^2\log^{20}n}{n}$, the number of eigenvalues $N_I$ of $W_n$ in $I$ satisfies the concentration inequality
$$|N_I-n\int_I\rho(x)dx|\le\delta n|I|$$
with overwhelming probability.
\end{thm}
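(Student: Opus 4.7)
The plan is the Stieltjes-transform / self-consistent-equation strategy of Erd\H{o}s--Schlein--Yau, adapted to the two-point external source. Write $G(z)=(W_n-z)^{-1}$ and $m_n(z)=\frac{1}{n}\mathrm{tr}\,G(z)$, and let $m(z)=\int(x-z)^{-1}\rho(x)\,dx$ be the Stieltjes transform of the limit measure from Theorem \ref{limESD}. I will prove that for $z=E+i\eta$ with $\eta \ge K^2\log^{20}n/n$ one has $|m_n(z)-m(z)|\ll \delta$ with overwhelming probability, uniformly in $E$ on a fixed compact set containing $\supp\rho$ (via a union bound over a dense grid). The eigenvalue-counting statement then follows from a standard Helffer--Sj\"ostrand representation.

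The core step is to derive an approximate Pastur equation via the Schur complement formula applied to each $G_{kk}(z)$. Removing the $k$-th row and column of $W_n$ yields
$$G_{kk}(z)=\frac{1}{A_{kk}-z+\tfrac{1}{\sqrt n}X_{kk}-Y_k(z)},\qquad Y_k(z)=\frac{1}{n}\sum_{i,j\neq k}\overline{X_{ki}}\,G^{(k)}_{ij}(z)\,X_{jk},$$
where $G^{(k)}(z)$ is the resolvent of the $(n-1)\times(n-1)$ minor obtained by deleting row/column $k$. Since $\E[\overline{X_{ki}}X_{jk}\mid W^{(k)}]=\delta_{ij}$, the conditional mean of $Y_k$ is $\tfrac{1}{n}\mathrm{tr}\,G^{(k)}(z)$, which differs from $m_n(z)$ by $O((n\eta)^{-1})$ by eigenvalue interlacing. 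The log-Sobolev inequality in Condition ${\bf C_0}$ together with the almost-sure bound $|X_{ki}|\le K$ feeds into a Hanson--Wright type concentration estimate, and using $\|G^{(k)}(z)\|_{HS}^2 = n\,\im m^{(k)}(z)/\eta$ one obtains
$$|Y_k(z)-m_n(z)|\le \frac{K\log^{O(1)} n}{\sqrt{n\eta}}\sqrt{\im m_n(z)}+O\!\left(\tfrac{1}{n\eta}\right)$$
with overwhelming probability, uniformly in $k$. Since $A_{kk}=\pm a$ with equal multiplicity $n/2$, averaging the Schur identity over $k$ and substituting the concentration bound yields
$$m_n(z)=\frac{1}{2}\cdot\frac{1}{a-z-m_n(z)}+\frac{1}{2}\cdot\frac{1}{-a-z-m_n(z)}+\mathcal{E}(z),\qquad |\mathcal{E}(z)|\le \frac{K\log^{O(1)}n}{\sqrt{n\eta}},$$
which is exactly the Pastur cubic of Theorem \ref{limESD} up to a small error whenever $\eta$ is above the threshold of the theorem.

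The next step is stability of this perturbed cubic. Let $F(m,z)$ denote $m$ minus the right-hand side at $\mathcal{E}=0$; then $F(m(z),z)=0$, and away from the four endpoints $\pm z_1,\pm z_2$ one has $|\partial_m F(m(z),z)|\ge c>0$ by direct computation. A standard bootstrap argument, initiated at large $\eta = O(1)$ where $|m_n-m|\lesssim 1/\eta$ is automatic and descending $\eta$ along a dense grid down to $K^2\log^{20}n/n$, converts $F(m_n,z)=\mathcal{E}(z)$ into $|m_n(z)-m(z)|\lesssim|\mathcal{E}(z)|\ll \delta$ uniformly. To extract the eigenvalue count, approximate $\chi_I$ by a smooth bump $\psi_I$ on scale $\eta = |I|/\log^2 n$ and apply the Helffer--Sj\"ostrand formula
$$N_I-n\int_I\rho(x)\,dx = \frac{n}{\pi}\iint \dd_{\bar z}\tilde\psi_I(z)\,\im(m_n-m)(z)\,dx\,dy + O(n\eta),$$
where $\tilde\psi_I$ is the standard almost-analytic extension of $\psi_I$. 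Both terms are bounded by $\delta n|I|/2$ with overwhelming probability.

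The main obstacle will be the stability of the Pastur cubic near the four endpoints $\pm z_1,\pm z_2$, where $\partial_m F$ vanishes and linear stability degenerates to the quadratic estimate $|m_n-m|^2\lesssim |\mathcal{E}|$. One must therefore drive the error $\mathcal{E}$ below $\delta^2/\log^2 n$ rather than just $\delta$, which is still comfortably within reach at the scale $|I|\ge K^2\log^{20}n/n$. In particular, the spectral gap $(-z_2,z_2)$, absent in the classical Wigner case, contributes two such degenerate endpoints and requires verifying that the number of eigenvalues inside the gap is $o(n|I|)$---again a consequence of the Stieltjes control near $\pm z_2$.
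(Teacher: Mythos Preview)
Your proposal is a sound route to the local law, but it is \emph{not} the route taken in the paper. The paper never derives a finite-$n$ self-consistent (Pastur) equation for $m_n$, never performs a stability analysis of the cubic, and never runs a bootstrap in $\eta$. Instead it decouples the problem into expectation plus fluctuation: the bound $|\E s_n(z)-s(z)|=O(1/n)$ is imported wholesale from Capitaine--Donati-Martin--F\'eral (their Proposition~4.1, restated as Lemma~\ref{lem.ESDConvRate}), and the fluctuation $|s_n(z)-\E s_n(z)|$ is controlled by applying the Poincar\'e/log-Sobolev inequalities of Condition~${\bf C_0}$ directly to $s_n$ viewed as a smooth function of the matrix entries (Lemma~\ref{lem.ESDfluc}). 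The conversion from Stieltjes control to eigenvalue counting then quotes the Tao--Vu Lemma~64 rather than Helffer--Sj\"ostrand.

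In short: you run the full Erd\H{o}s--Schlein--Yau machinery (Schur complement, Hanson--Wright on the quadratic forms $Y_k$, cubic stability, continuity/bootstrap in $\eta$), whereas the paper outsources the ``mean'' part to \cite{Capitaine11} and only supplies the concentration part. What your approach buys is self-containment and robustness---you do not need the Capitaine et al.\ result, and your method would extend to more general $A_n$ once the corresponding fixed-point equation is understood. What the paper's approach buys is brevity: by citing \cite{Capitaine11} it bypasses entirely the edge-stability issue you flag in your last paragraph (the degeneration of $\partial_m F$ at $\pm z_1,\pm z_2$), since no perturbative inversion of the cubic is ever needed. Both arguments are valid; they simply allocate the work differently.
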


\begin{remark} It was our intention to use Theorem \ref{thmlocallaw} to prove the universality conjecture for the random Hermitian matrix with external source. The goal was an analogue of the Four Moment Theorem \cite{TVUniBulk} as the local law was the biggest challenge step, the rest is more or less the same as in the proof for Wigner matrix. However, very recently, S. O'Rouke and V. Vu \cite{OV} has announced a proof of the Four Moment Theorem for a Hermitian  random matrix with a more general external source matrix. So we will only present the proof of the local law which is slightly simpler and suitable for our setting.
\end{remark}

The rest of the paper is organized as follows: In section 2 we prove the fluctuation of the ESD which will serve as a critical step in the proof of Theorem \ref{thmlocallaw}. In Section 3 we present the proof of Theorem \ref{thmlocallaw}.\\
We will often use the phrase "the event $E$ holds with overwhelming probability" which means $\P(E)\ge 1- \exp(-\omega(\log n))$.

\section{Fluctuation of empirical statistic distribution}

\begin{lem}\label{lem.crubd} Let $W_n$  be a random Hermitian matrix with external source satisfying condition ${\bf C_0}$. Then for $C$ large enough,
$$N_I\le Cn|I|$$
with overwhelming probability.
\end{lem}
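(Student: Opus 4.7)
The plan is a Stieltjes transform argument, where I reduce the counting question to an upper bound on the imaginary part of $s_n(z) = \frac{1}{n}\text{tr}(W_n - z)^{-1}$. Writing $z = E + i\eta$ with $E$ the centre of $I$ and $\eta = |I|/2$, the nonnegativity of the Poisson kernel yields the standard bound
$$N_I \le 2n\eta \cdot \im s_n(z),$$
so it suffices to show $\im s_n(z) \le C/2$ with overwhelming probability for some absolute constant $C$.

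To control $\im s_n(z)$ I would expand $s_n = \frac{1}{n}\sum_i G_{ii}$ with $G = (W_n - z)^{-1}$ and apply the Schur complement formula
$$G_{ii}(z) = \frac{1}{(A_n)_{ii} + \xi_{ii}/\sqrt{n} - z - Y_i(z)},\qquad Y_i(z) = \mathbf{w}_i^*(W_n^{(i)} - z)^{-1}\mathbf{w}_i,$$
in which $\mathbf{w}_i$ collects the off-diagonal entries of the $i$-th row of $W_n$ and $W_n^{(i)}$ is the minor obtained by deleting row and column $i$. Conditionally on $W_n^{(i)}$ the components of $\mathbf{w}_i$ are independent with variance $1/n$, so $\E[Y_i(z)\mid W_n^{(i)}] = \frac{1}{n}\text{tr}(W_n^{(i)} - z)^{-1}$, which differs from $s_n(z)$ by $O(1/(n\eta))$ by the resolvent interlacing identity. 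Concentration of $Y_i(z)$ around its conditional mean is the key input: it can be obtained by applying the log-Sobolev inequality of condition ${\bf C_0}$ to the conditional quadratic form, or equivalently by a Hanson--Wright style inequality that exploits the truncation bound (\ref{trunc.bnd}).

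Averaging the resulting Schur identities over $i$ then produces an approximate self-consistent equation
$$s_n(z) \approx \frac{1}{2}\left[\frac{1}{a - z - s_n(z)} + \frac{1}{-a - z - s_n(z)}\right],$$
which is the cubic Pastur equation from Theorem \ref{limESD}. Since the genuine solution $s(z)$ has uniformly bounded imaginary part on the relevant domain, a standard stability analysis of the cubic (for instance, a contraction mapping argument centred at $s(z)$) forces $s_n(z)$ to remain within a bounded distance of $s(z)$, yielding $\im s_n(z) \le C/2$.

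The main obstacle will be a bootstrap in $\eta$: the useful form of the concentration bound on $Y_i$ involves the Hilbert--Schmidt norm $\|(W_n^{(i)} - z)^{-1}\|_{HS}^2 = \eta^{-1}\,\im \text{tr}(W_n^{(i)} - z)^{-1}$, which itself is controlled by $\im s_n(z)$. To break this circularity I would run a continuity argument, first establishing the desired bound at an initial scale $\eta_0 = O(1)$ where trivial estimates suffice, and then descending to the target scale $\eta = |I|/2$ through a chain of small decrements, using the deterministic Lipschitz bound $|\partial_\eta s_n| \le \eta^{-2}$ to propagate the estimate across scales. Carrying out this multi-scale bootstrap at the overwhelming probability level is the technical heart of the lemma.
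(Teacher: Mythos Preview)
Your proposal would work in principle, but it deploys the full local-law machinery (self-consistent cubic, stability analysis, multi-scale bootstrap) to prove what is meant to be an elementary a priori bound. The paper's proof simply invokes Lemma~66 of \cite{TVUniBulk}, whose argument is much shorter and never touches the limiting equation. After the same reduction $N_I \le 2n\eta\,\im s_n(z)$ and the Schur identity you wrote, one uses only the trivial bound $\im G_{jj}\le |G_{jj}|\le 1/\im Y_j$. Expanding $Y_j$ in the eigenbasis of the minor $W_n^{(j)}$ gives $\im Y_j \ge \frac{1}{2\eta}\sum_{k:\mu_k\in I}|\Bw_j^*\Bv_k|^2$; a Hanson--Wright/large-deviation lower bound on these projections (this is where the entry bound $K$ enters) yields $\im Y_j \gtrsim N_I/(n\eta)$ with overwhelming probability, using Cauchy interlacing to pass from $N_I(W_n^{(j)})$ to $N_I$. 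Combining, $N_I \le 2n\eta\cdot \frac{1}{n}\sum_j (\im Y_j)^{-1} \lesssim (n\eta)^2/N_I$, which closes to $N_I\lesssim n\eta \asymp n|I|$.

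Two further remarks. First, the external source is genuinely irrelevant here: $A_n(j,j)$ is real, so it shifts only the real part of the Schur denominator and never enters the bound $|G_{jj}|\le 1/\im Y_j$; this is exactly the paper's one-line observation. Second, note the logical order in the paper: Lemma~\ref{lem.crubd} feeds into Lemma~\ref{lem.ESDfluc}, which in turn feeds into Theorem~\ref{thmlocallaw}. Your proposed route essentially reproves a weak local law to obtain the crude bound, reversing that dependency; while not circular, it means the ``main obstacle'' you flag (the $\eta$-bootstrap) is not an obstacle for this lemma at all, and your stability analysis of the cubic would also have to be made uniform over $x$ outside $\supp\mu$, an issue the Tao--Vu argument sidesteps entirely.
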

\begin{proof}
The proof of this lemma is exactly the same as that of Lemma 66 in \cite{TVUniBulk}. The only difference is the diagonal entries $\zeta_{ii}=\xi_{ii}+A_n(i,i)$, but we only need to consider the imaginary part of thesel entries, and $A_n(i,i)$ are real and don't contribute to the imaginary part. 
\end{proof}
Recall that the Stieltjes transform of a probability measure $\nu$ is defined as
$$s_{\nu}(z)=\int_\R\frac{d\nu(x)}{x-z}$$
for $z\in\C^{+}:=\{z:\im z>0\}$. Here we will denote $s(z)$ to be the Stieltjes transform of the limiting measure $\mu$, and $s_n(z)$ be the Stieltjes transform of the ESD of $W_n$, so
$$s_n(z)=\frac{1}{n}\sum_{k=1}^n\frac{1}{\lam_k-z}.$$
\begin{lem}[Fluctuation of ESD]\label{lem.ESDfluc} Let $W_n$  be a random Hermitian matrix with external source satisfying condition ${\bf C_0}$ and the entries $|\zeta_{ij}|\le K$ almost surely for all $i,\ j$. Fix $1\ge\eta\ge\frac{K^2\log^{19} n}{n}$, $x\in \supp(\mu)$ and let $z=x+\sqrt{-1}\eta$. Then\\
(i) There exists a constant $C$ so that
\begin{equation}\label{Stiel.var.ineq}
\Var(s_n(z))\le\frac{C}{n^2\eta^3}
\end{equation}
(ii) For any $\epsilon>0$ there exists a constant $c$ so that
\begin{equation}\label{Stiel.fluc.ineq}
\P(|s_n(z)-\E s_n(z)|\ge \epsilon)\le \exp(-cn\eta\epsilon\min\{(\log n)^{-1},n\eta^2\epsilon\})
\end{equation}

\end{lem}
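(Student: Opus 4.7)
The plan is to view $s_n(z)$ as a smooth function of the real and imaginary parts of the entries of $X_n$ and to exploit Condition ${\bf C_0}$ via functional inequalities, with the crude eigenvalue-counting bound Lemma \ref{lem.crubd} as the main external input.

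For (i), I would apply the tensorized Poincaré inequality \eqref{poincare}. Writing $G = (W_n-z)^{-1}$, the derivative of $s_n(z) = \frac{1}{n}\mathrm{tr}\,G$ with respect to an entry $e$ of $X_n$ is a multiple of $\frac{1}{n\sqrt n}(G^2)_{ij}$, and summing over all entries gives $\sum_e |\partial_e s_n(z)|^2 \le C\,\mathrm{tr}\,|G|^4/n^3$. Using the spectral identity $\sum_k |\lambda_k-z|^{-4} \le \eta^{-2}\sum_k|\lambda_k-z|^{-2} = n\eta^{-3}\im s_n(z)$ one obtains the key estimate
\begin{equation*}
\sum_e |\partial_e s_n(z)|^2 \;\le\; \frac{C\,\im s_n(z)}{n^2\eta^3}.
\end{equation*}
Taking expectations and applying \eqref{poincare} gives $\Var(s_n(z)) \le C\,\E[\im s_n(z)]/(n^2\eta^3)$. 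A dyadic decomposition of $\im s_n(z) = \frac{1}{n}\sum_k \eta/|\lambda_k-z|^2$ into shells $|\lambda_k-x| \sim 2^j\eta$, combined with Lemma \ref{lem.crubd} at each scale, yields $\E\,\im s_n(z) = O(1)$ and (i) follows.

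For (ii), I would combine Herbst's argument from the log-Sobolev inequality \eqref{logSob} with the same gradient estimate to obtain the two-regime tail. On the event $\Omega_M := \{\im s_n(z) \le M\}$, which has probability at least $1 - e^{-\omega(\log n)}$ for $M = O(1)$ by Lemma \ref{lem.crubd}, the estimate above is a deterministic Lipschitz bound $|\nabla s_n|^2 \le M/(n^2\eta^3)$. The standard Herbst bound then gives $\P(|s_n - \E s_n| \ge \epsilon,\,\Omega_M) \le 2\exp(-c\,\epsilon^2 n^2\eta^3/M)$, yielding the sub-Gaussian regime $\exp(-cn^2\eta^3\epsilon^2)$. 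For larger $\epsilon$ one either allows $M$ to grow like a power of $\log n$ (picking up a $\log n$ loss in the exponent) or applies a self-bounding Herbst iteration exploiting the random Lipschitz structure $|\nabla s_n|^2 \le \im s_n/(n^2\eta^3)$; either route gives the sub-exponential tail $\exp(-cn\eta\epsilon/\log n)$. Combining the two regimes and taking the minimum matches the stated bound.

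The main obstacle is bridging the good and bad regimes cleanly, since applying LSI directly is delicate when the Lipschitz constant is itself a random variable. I would handle this either by introducing a truncated surrogate function with a deterministic Lipschitz constant that coincides with $s_n$ on $\Omega_M$, or by running a two-step Herbst iteration in the spirit of the Boucheron--Lugosi--Massart bound for self-bounding functions: the first step extracts concentration of $\im s_n$ around a constant, which then feeds back into a sub-Gaussian Herbst bound for $s_n$. The presence of the external-source matrix $A_n$ does not interfere with either argument, since $A_n$ is deterministic and contributes nothing to the gradient in the entries of $X_n$, and Lemma \ref{lem.crubd} already covers the perturbed model.
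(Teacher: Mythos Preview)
Your treatment of part (i) is essentially the paper's argument: Poincar\'e inequality plus the resolvent identity reduces everything to $\frac{1}{n^3}\E\sum_k|\lambda_k-z|^{-4}$, which you then control via Lemma~\ref{lem.crubd}. The only cosmetic difference is that you pass through the inequality $|\lambda_k-z|^{-4}\le\eta^{-2}|\lambda_k-z|^{-2}$ and bound $\E\,\im s_n(z)$, whereas the paper slices the quartic sum directly into intervals $I_h=[x+(h-\tfrac12)\eta,\,x+(h+\tfrac12)\eta]$ and invokes Lemma~\ref{lem.crubd} on each. Both give the same $C/(n^2\eta^3)$.

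For part (ii) your route and the paper's genuinely diverge. The paper (following Erd\H os--Schlein--Yau) never conditions on a good event. Instead it runs the Herbst computation in its entropy form: setting $u=\exp(e^y\re(s_n-\E s_n))/Z$ and differentiating $e^{-y}\log Z$ in $y$, the log-Sobolev inequality bounds the derivative by $e^y\int|\nabla s_n|^2\,u\,d\P$. The same interval decomposition as in (i) then controls this \emph{tilted} expectation by $Ce^y/(n^2\eta^3)$, but only provided $e^y=L<n\eta/(C_1\log n)$; integrating and applying Chernoff with this constraint on $L$ is exactly what produces the $\min\{(\log n)^{-1},\,n\eta^2\epsilon\}$ dichotomy in the exponent. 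The two regimes fall out of whether the unconstrained Chernoff optimum for $L$ lies below or above $n\eta/\log n$.

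Your approach --- condition on $\Omega_M=\{\im s_n\le M\}$, apply Herbst with a deterministic Lipschitz constant, then patch up via a truncated surrogate or a self-bounding iteration --- can be made to work, and you correctly flag the obstacle that LSI does not directly tolerate a random Lipschitz constant. But it is more laborious than the paper's method, which absorbs the randomness of $|\nabla s_n|^2$ into the $u$-weighted integral and handles it in one stroke. The payoff of the paper's route is that no surrogate function or two-pass argument is needed; the cost is that one must justify the tilted bound $\int\sup_h N_{I_h}\,u\,d\P\le Cn\eta$, which implicitly requires a quantitative (large-deviation) form of Lemma~\ref{lem.crubd} rather than merely ``overwhelming probability.''
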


\begin{proof} The proof mostly follows that of Theorem 3.1 of \cite{ESY_Local}. First we prove (i). Let $\Bu_1,\dots, \Bu_n$ be the orthonormal basis of eigenvectors of $W_n$ corresponding to the eigenvalues $\lambda_1,\dots,\lam_n$. By first order pertubation theory
\begin{align}
\frac{\partial \lam_k}{\partial \re \zeta_{ij}}&=\bar{\Bu}_k(i)\Bu_k(j)+\bar{\Bu}_k(j)\Bu_k(i)=2\re (\bar{\Bu}(i)\Bu(j))\\
\frac{\partial \lam_k}{\partial \im \zeta_{ij}}&=\sqrt{-1}[\bar{\Bu}_k(i)\Bu_k(j)-\bar{\Bu}_k(j)\Bu_k(i)]=2\im (\bar{\Bu}(j)\Bu(i))
\end{align}
for $1\le i<j\le n$ and 
$$\frac{\partial \lam_k}{\partial \zeta_{ii}}=\bar{\Bu}_k(i)\Bu_k(i)$$
for $i=1,\dots,n$ and $\zeta_{ii}=\xi_{ii}+A_n(i,i)$, and by the Poincare inequality we obtain
\begin{equation}
\begin{split}
\E|s_n(z)-\E s_n(z)|^2\le\ & C\sum_{i<j}\E\left(\left|\frac{\partial s_n(z)}{\partial \sqrt{n}\re \zeta_{ij}}\right|^2+\left|\frac{\partial s_n(z)}{\partial \sqrt{n}\im \zeta_{ij}}\right|^2\right)\\
&+C\sum_{i=1}^n\E\left|\frac{\partial s_n(z)}{\partial \sqrt{n}\re \zeta_{ii}}\right|^2\\
=&\frac{C}{n^3}\sum_{i<j}\E\left(\left|\sum_k\frac{1}{(\lam_k-z)^2}\frac{\partial\lam_k}{\partial\re\zeta_{ij}}\right|^2 + \left|\sum_k\frac{1}{(\lam_k-z)^2}\frac{\partial\lam_k}{\partial\im\zeta_{ij}}\right|^2 \right)\\
&+\frac{C}{n^3}\sum_{i=1}^n\E\left|\sum_k\frac{1}{(\lam_k-z)^2}\frac{\partial\lam_k}{\partial\zeta_{ii}}\right|^2\\
=&\frac{C}{n^3}\E\sum_{i<j}\sum_{k,l}\bigg(\frac{\re(\bar{\Bu}_k(i)\Bu_k(j))\re(\bar{\Bu}_l(i)\Bu_l(j))}{(\lam_k-z)^2(\lam_l-\bar{z})^2}\\
&\qquad\;\;\;\;\;\;\;\;\;\;\;\;\;\;+\frac{\im(\bar{\Bu}_k(j)\Bu_k(i))\im(\bar{\Bu}_k(j)\Bu_k(i))}{(\lam_k-z)^2(\lam_l-\bar{z})^2}\bigg)\\
&+\frac{C}{n^3}\E\sum_{i=1}^n\sum_{k,l}\frac{|\Bu_k(i)|^2|\Bu_l(i)|^2}{(\lam_k-z)^2(\lam_l-\bar{z})^2}\\
=&\frac{C}{n^3}\E\sum_{k,l}\frac{1}{(\lam_k-z)^2(\lam_l-\bar{z})^2}\sum_{i,j}\Bu_k(i)\overline{\Bu_l(i)}\Bu_l(j)\overline{\Bu_k(j)}\\
=&\frac{C}{n^3}\E\sum_{k}\frac{1}{(\lam_k-z)^4}
\end{split}
\end{equation}
For $h\in \Z$ define the intervals $I_h=[x+(h-\frac{1}{2})\eta,x+(h+\frac{1}{2})\eta]$. Take $K_0$ large enough so that $\sup_k|\lam_k|\le K_0$ with ovewhelming probability and $J$ be an interval so that  $[-K_0,K_0]\subset \cup_{h\in J}I_h$. Then
\begin{equation}
\begin{split}
\sum_{k}\frac{1}{|\lam_k-z|^4}&\le\sum_{h\in J}\sum_{k:\lam_k\in I_h}\frac{1}{|\lam_k-z|^4}+\sum_{k:|\lam_k|\ge K_0}\frac{1}{\eta^4}\\
&\le \frac{C}{\eta^4}\sup_{h\in J} N_{I_n}+\frac{1}{\eta^4}|\{k:|\lam_k|\ge K_0\}|
\end{split}
\end{equation}
Combine with Lemma \ref{lem.crubd} one get
\begin{equation}
\begin{split}
\Var s_n(z) &\le \frac{C}{n^3\eta^4}\E\sup_{h\in J}N_{I_n}\\
&\le \frac{C}{n^2\eta^3}
\end{split}
\end{equation}

Now we prove (ii). We will control the real part of $s_n(z)-\E s_n(z)$, the imaginary part is the same. Let $d\P$ denote the probability measure of $W_n$ and define a density function 
$$u=\frac{\exp(e^y\re(s_n(z)-\E s_n(z)))}{\int \exp(e^y\re(s_n(z)-\E s_n(z))) d\P}.$$
Then by the log-Sobolev inequality we obtain
\begin{equation}
\begin{split}
\frac{d}{dy}\Big[e^{-y}\log&\int \exp(e^y\re(s_n(z)-\E s_n(z))) d\P\Big]= e^{-y}\int u\log u d\P\\
\le&Ce^{-y}\int|\nabla\sqrt{u}|^2 d\P\\
\le&Ce^{-y}\int\left[\sum_{i<j}\left(\left|\frac{\partial s_n(z)}{\partial\sqrt{n}\re \zeta_{ij}}\right|^2+\left|\frac{\partial s_n(z)}{\partial\sqrt{n}\im \zeta_{ij}}\right|^2 \right)+\sum_{i=1}^n\left|\frac{\partial s_n(z)}{\partial\sqrt{n} \zeta_{ii}}\right|^2 \right] ud\P\\
\le&\frac{Ce^y}{n^3\eta^4}\left[\int\sup_{h\in J}N_{I_h} ud\P +\int|\{k:|\lam_k|\ge K_0\}|ud\P\right]\\
\le&\frac{Ce^y}{n^2\eta^3}
\end{split}
\end{equation}
 for $K_0$ large enough,  using similar arguments as in the proof of (i). Integrate this from $y=y_0$ to $y=\log L$ for some $L<\frac{n\eta}{C_1\log n}$, we get
$$\log\E \exp(L\re[s_n(z)-\E s_n(z)])\le Le^{-y_0}\log\E\exp(e^{y_0}\re[s_n(z)-\E s_n(z)])+\frac{CL^2}{n^2\eta^3},$$
where the first term of the r.h.s will vanish when $y_0\to\-\infty$. Thus
$$\E \exp(L\re[s_n(z)-\E s_n(z)])\le \exp(CL^2n^{-2}\eta^{-3}),$$
and
\begin{equation}
\begin{split}
\P(\re[s_n(z)-\E s_n(z)]\ge \epsilon)&\le \exp(CL^2n^{-2}\eta^{-3}-\epsilon L)\\
&\le\exp(-cn\eta\epsilon\min\{\log^{-1}n, n\eta^2\epsilon\})
\end{split}
\end{equation}
Repeat the argument for $-s_n(z)$ and we get the proof for (ii).
\end{proof}
\section{Proof of local law}
One important ingredient of the proof is the following Lemma proved by Capitaine et al. \cite{Capitaine11}
\begin{lem}[Convergence rate of ESD, Proposition 4.1 \cite{Capitaine11}]\label{lem.ESDConvRate} Let $W_n=\frac{1}{\sqrt{n}}X_n+A_n$ be a random Hermitian matrix with external source satisfying condition $C_0$. Let $\mu_n$ be the free convolution between the semicircle distribution and the spectral distribution of $A_n$, and $\ts_n(z)$ is the Stieltjes transform of $\mu_n$. Recall that $s_n(z)$ is the Stieltjes transform of the ESD of $W_n$. Then for a fixed $z\in\C^{+}$
$$|\E s_n(z)-\ts_n(z)| =O(1/n).$$
\end{lem}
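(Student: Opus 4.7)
The plan is to derive a self-consistent (Pastur) equation for $\E s_n(z)$ with an $O(1/n)$ error and then invoke stability of the exact self-consistent equation satisfied by $\ts_n$. Throughout, $z\in\C^+$ is held fixed, so $\eta:=\im z$ is a positive constant and resolvent entries are uniformly bounded by $\eta^{-1}$; all implicit constants may depend on $z$.

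First I would apply the Schur complement formula to the resolvent $G(z)=(W_n-zI)^{-1}$. For each index $i$,
$$G_{ii}(z) = \frac{1}{\frac{1}{\sqrt{n}}\zeta_{ii}+A_n(i,i) - z - h_i^{*} G^{(i)}(z) h_i},$$
where $h_i$ is the $i$-th column of $\frac{1}{\sqrt n}X_n$ with its $i$-th entry removed and $G^{(i)}$ is the resolvent of the $(n-1)$-minor obtained by deleting the $i$-th row and column. Since $h_i$ is independent of $G^{(i)}$ and has entries of variance $1/n$, the conditional mean of the quadratic form is $\E[h_i^{*} G^{(i)} h_i\mid W^{(i)}] = \frac{n-1}{n}s_n^{(i)}(z)$. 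Concentration of the quadratic form (via Hanson–Wright, which follows from the log-Sobolev inequality in Condition $\mathbf{C_0}$ together with the almost-sure truncation $|\zeta_{ij}|\le K$) gives $\Var(h_i^{*} G^{(i)} h_i)=O(1/(n\eta^2))$. Interlacing provides $|s_n^{(i)}(z)-s_n(z)|=O(1/(n\eta))$, and Lemma \ref{lem.ESDfluc} gives $\Var(s_n(z))=O(1/(n^2\eta^3))$. Combining these, the random denominator concentrates on the deterministic quantity $A_n(i,i)-z-\E s_n(z)$ with fluctuations of order $O(n^{-1/2})$ and variance $O(1/n)$.

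Next I would Taylor-expand the reciprocal around that concentrated value. The first-order term vanishes upon taking expectation, so the leading error comes from the second-order term, bounded by $\Var(\text{denominator})/\eta^3=O(1/n)$; the tails beyond any polynomial scale are negligible by the exponential concentration of Lemma \ref{lem.ESDfluc}. This yields
$$\E G_{ii}(z) = \frac{1}{A_n(i,i) - z - \E s_n(z)} + O(1/n),$$
and averaging over $i$,
$$\E s_n(z) = \frac{1}{n}\sum_{i=1}^n \frac{1}{A_n(i,i) - z - \E s_n(z)} + O(1/n).$$
Since $\ts_n(z)$ satisfies exactly the same equation with no remainder, subtracting the two relations and using that the derivative of the self-consistent map $s\mapsto \frac{1}{n}\sum_i(A_n(i,i)-z-s)^{-1}$ at $s=\ts_n(z)$ has modulus strictly less than one whenever $\im z>0$ (the standard stability of the Pastur equation on $\C^+$) gives $|\E s_n(z)-\ts_n(z)|=O(1/n)$.

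The main obstacle is obtaining the $O(1/n)$ rate rather than a weaker $O(n^{-1/2})$ one: a naive application of Schur complement plus concentration only controls the leading random fluctuation of the denominator and yields $O(n^{-1/2})$. To reach $O(1/n)$ one must combine the variance estimate of Lemma \ref{lem.ESDfluc} with a second-order Taylor expansion of the reciprocal, exploiting the vanishing of the first-order term in expectation. Stability of the Pastur equation is essentially automatic for fixed $z$ with $\im z>0$; the delicate tracking of its dependence on $\eta$ (which would be needed for a local law at mesoscopic scales) is not required here, because $z$ is fixed.
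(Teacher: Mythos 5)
The paper does not give its own proof of this lemma; it is quoted verbatim as Proposition 4.1 of Capitaine et al.~\cite{Capitaine11} and invoked as a black box, so there is no internal argument for me to compare yours against. Your self-contained Schur-complement/self-consistent-equation derivation is a reasonable way to fill the gap, and its skeleton is sound for fixed $z\in\C^{+}$. Three places need tightening, of which the last is a genuine (if small) gap.

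First, ``the first-order term vanishes upon taking expectation'' is not right as stated: $\E[h_i^{*}G^{(i)}h_i]=\frac{n-1}{n}\E s_n^{(i)}(z)$ differs from $\E s_n(z)$ by an interlacing term of order $1/(n\eta)$, so the linear term contributes $O(1/n)$ rather than zero; harmless for the conclusion, but the claimed cancellation is illusory. Second, Hanson--Wright is not what you need (it gives tails, not a variance bound); the relevant input is the conditional second moment $\E\bigl[\,|h_i^{*}G^{(i)}h_i-\tfrac{n-1}{n}s_n^{(i)}|^2\,\big|\,W^{(i)}\bigr]\le \frac{C}{n^2}\sum_{k,l}|G^{(i)}_{kl}|^2=O(1/(n\eta^2))$, which requires only fourth moments; and there are no tail events to dispose of since the full Schur-complement denominator always has imaginary part $\le-\eta$ because $\im(h_i^{*}G^{(i)}h_i)>0$. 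Third, and most substantively, the stability step is stated too loosely. After subtracting the exact relation for $\ts_n$ you get $(\E s_n-\ts_n)(1-f)=O(1/n)$ with $f=\frac1n\sum_i (A_n(i,i)-z-\E s_n)^{-1}(A_n(i,i)-z-\ts_n)^{-1}$, which is not the derivative of the Pastur map at $\ts_n$ unless you already know $\E s_n$ is close to $\ts_n$. To get $|f|\le 1-c(\eta)$ directly, apply Cauchy--Schwarz and use the identity $\frac1n\sum_i|A_n(i,i)-z-w|^{-2}=\frac{\im g(w)}{\eta+\im w}$ for $g(w)=\frac1n\sum_i(A_n(i,i)-z-w)^{-1}$, evaluated at $w=\ts_n$ (exact fixed point, ratio $=\frac{\im\ts_n}{\eta+\im\ts_n}<1$) and at $w=\E s_n$ (fixed point up to $O(1/n)$), together with $\im\E s_n,\,\im\ts_n\le\eta^{-1}$. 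The crude bound $\frac1n\sum_i|A_n(i,i)-z-w|^{-2}\le\eta^{-2}$ is useless when $\eta\le1$, so this extra step is genuinely needed even for fixed $z$; once it is supplied the argument closes.
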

In our problem the ESD of $A_n$ are the same $\mu_A=\frac{1}{2}(\delta_a+\delta_{-a})$ for every even $n$, so $\ts_n(z)=s(z)$ for every even $n$, where $s(z)$ is the Stieltjes tranform of $\mu=\mu_{sc}\boxplus\mu_A$. Combine Lemma \ref{lem.ESDConvRate} with part (ii) of Lemma \ref{lem.ESDfluc} we obtain that 
$$|s_n(z)-s(z)|\le o(1)$$
for all $z$ with $\re z\in \supp\mu$ and $\im z\ge\frac{K^2\log^{19}n}{n}$ with uniformly overwhelming probability. Theorem \ref{thmlocallaw} is then a direct consequence of the following Lemma
\begin{lem}[Control of Stieltjes transform implies control on ESD]\label{lem.ConStie} Let $1\ge \eta\ge1/n$ and $L,\epsilon,\delta>0$. Suppose one has the bound
$$|s_n(z)-s(z)|\le\delta$$
with uniformly overwhelming probability for all $z$ with $|\re z|<L$ and $\im(z)\ge \eta$. Then for any interval $I\subset[-L+\epsilon,L-\epsilon]$ with $|I|\ge\max(2\eta,\frac{\eta}{\delta}\log\frac{1}{\delta})$, one has 
$$|N_I-n\mu(I)|\ll_\epsilon \delta n|I|$$
with overwhelming probability.
\end{lem}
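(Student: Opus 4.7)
The plan is to compare $N_I/n$ and $\mu(I)$ through the Poisson-smoothed indicator
$$\phi(\lambda):=\frac{1}{\pi}\int_a^b \frac{\eta}{(\lambda-x)^2+\eta^2}\,dx = \frac{1}{\pi}\bigl[\arctan((b-\lambda)/\eta)-\arctan((a-\lambda)/\eta)\bigr],$$
where $I=[a,b]$. By Fubini, $\int\phi\,d\nu = \frac{1}{\pi}\int_a^b \im s_\nu(x+i\eta)\,dx$ for any probability measure $\nu$, so the hypothesis $|s_n(z)-s(z)|\le\delta$ on the line $\im z=\eta$ immediately gives $|\int\phi\,d(\mu_n-\mu)|\le\delta|I|/\pi$. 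The first move is the triangle inequality
$$N_I - n\mu(I) \;=\; n\int(\mathbf{1}_I-\phi)\,d\mu_n \;+\; n\int\phi\,d(\mu_n-\mu) \;+\; n\int(\phi-\mathbf{1}_I)\,d\mu,$$
reducing the problem to estimating the two ``boundary'' terms involving $\phi-\mathbf{1}_I$.

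A direct computation with the $\arctan$ formula shows $|\phi(\lambda)-\mathbf{1}_I(\lambda)|\le 2\eta/(\pi\,\dist(\lambda,\partial I))$ whenever $\dist(\lambda,\partial I)\ge \eta$, with the trivial bound $|\phi-\mathbf{1}_I|\le 1$ otherwise, and $\phi(\lambda)\lesssim \eta|I|/\dist(\lambda,\partial I)^2$ for $\lambda$ far outside $I$. Integrating against the bounded density of $\mu$ (from Theorem \ref{limESD}) gives $|\int(\phi-\mathbf{1}_I)\,d\mu|\lesssim \int|\phi-\mathbf{1}_I|\,d\lambda = O(\eta\log(|I|/\eta))$, which is $O(\delta|I|)$ by the size assumption $|I|\ge(\eta/\delta)\log(1/\delta)$.

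The hard part is the $\mu_n$ integral, because a priori $\mu_n$ could have many eigenvalues within distance $\eta$ of $\partial I$, where $\phi$ is a poor approximation of $\mathbf{1}_I$. To control this I would first bootstrap the Stieltjes bound into a sharp thin-interval count: from
$$\im s_n(x+i\eta)\;=\;\frac{1}{n}\sum_k\frac{\eta}{(\lambda_k-x)^2+\eta^2}\;\ge\;\frac{N_{[x-\eta,x+\eta]}}{2n\eta},$$
combined with $\im s_n(x+i\eta)\le \im s(x+i\eta)+\delta = O(1)$ uniformly for $|x|<L$, one gets $N_{[x-\eta,x+\eta]} \le Cn\eta$ simultaneously for every $x$ in an $\eta$-net of $[-L,L]$ (a union bound over polynomially many points preserves overwhelming probability). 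Now decompose a neighborhood of $\partial I$ into dyadic annuli $A_k:=\{\lambda:\,2^k\eta\le\dist(\lambda,\partial I)\le 2^{k+1}\eta\}$ for $0\le k\le \log_2(1/\delta)$: on $A_k$ the pointwise bound gives $|\phi-\mathbf{1}_I|\le C/2^k$, while the thin-interval count gives $\mu_n(A_k)\le C\cdot 2^k\eta$. Each annulus contributes $O(\eta)$, and summing the $O(\log(1/\delta))$ scales gives $O(\eta\log(1/\delta))$. Outside these scales the integrand is either uniformly $\le\delta$ (handled via Lemma \ref{lem.crubd}) or has a geometric tail $\phi\lesssim \eta|I|/\dist^2$; both give $O(\delta|I|)$.

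Putting the three bounds together yields $|N_I-n\mu(I)|=O(\delta n|I|)$, as desired. The heart of the argument is the bootstrap in the third paragraph: converting the hypothesis (an averaged bound on the Stieltjes transform at height $\eta$ above the real axis) into a pointwise count of eigenvalues in intervals of length $\eta$. Without this refinement, the crude estimate $N_J\le Cn|J|$ from Lemma \ref{lem.crubd} would only sustain the much weaker size assumption $|I|\ge\eta/\delta^2$, and the logarithmic sharpening in the hypothesis would be lost.
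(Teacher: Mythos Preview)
Your argument is correct and is essentially the same one the paper invokes: the paper does not give an independent proof but simply cites Lemma~64 of \cite{TVUniBulk}, whose proof proceeds exactly by Poisson-kernel smoothing of $\mathbf{1}_I$, the identity $\int\phi\,d\nu=\frac{1}{\pi}\int_I \im s_\nu(x+i\eta)\,dx$, and a dyadic boundary analysis fed by the thin-interval bound $N_{[x-\eta,x+\eta]}\le Cn\eta$ extracted from $\im s_n(x+i\eta)=O(1)$. One minor remark: your closing comment slightly overstates the role of the bootstrap, since in this paper Lemma~\ref{lem.crubd} already supplies $N_J\le Cn|J|$ down to scale $|J|\sim\eta$ and would by itself suffice for the dyadic step; the bootstrap you wrote is nonetheless correct and makes the argument self-contained from the Stieltjes hypothesis alone.
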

The proof of  Lemma \ref{lem.ConStie} is the same as that of Lemma 64 \cite{TVUniBulk}, with the semicirle density $\rho_{sc}(x)$ replaced by the density $\rho(x)$ of our limiting measure $\mu$.
\endproof


\bibliographystyle{plain}
\bibliography{random-matrices}

\begin{thebibliography}{1}

\bibitem{SBbook}
Z.D. Bai and J.W. Silverstein.
\newblock {\em {Spectral analysis of large dimensional random matrices}}.
\newblock Springer Verlag, 2010.

\bibitem{BK04}
P.~Bleher and A.~Kuijlaars.
\newblock Large n limit of gaussian random matrices with external source i.
\newblock {\em Comm. Math. Phys.}, 252, no. 1-3:43--76, 200.

\bibitem{Capitaine11}
M.~Capitaine, C.~Donati-Martin, D.~Féral, and M.~Février.
\newblock Free convolution with a semi-circular distribution and eigenvalues of
  spiked deformations of wigner matrices.
\newblock {\em Electronic Journal of Probability}, 16:1750--1792, 2011.

\bibitem{ESY_sine}
L.~Erdos, J.~Ramirez, B.~Schlein, and H-T Yau.
\newblock Universality of sine-kernel for wigner matrices with a small gaussian
  perturbation.
\newblock {\em Electronic Journal of Probability}, 15, no. 18:526--603, 2010.

\bibitem{ESY_Local}
L.~Erdos, B.~Schlein, and H.T. Yau.
\newblock {Semicircle law on short scales and delocalization of eigenvectors
  for Wigner random matrices}.
\newblock {\em Ann. Probab}, 37(3):815--852, 2009.

\bibitem{EYYbulk}
L.~Erdos, H.T. Yau, and J.~Yin.
\newblock {Bulk universality for generalized Wigner matrices}.
\newblock {\em Prob}, 154:341--407, 2012.

\bibitem{OV}
S.~O'Rouke and V.~H. Vu.
\newblock Universality of local eigenvalue statistics in random hermitian
  matrices with external source.
\newblock {\em arXiv:1308.1057}, 2013.

\bibitem{TVUniEdge}
T.~Tao and V.~Vu.
\newblock {Random matrices: Universality of local eigenvalue statistics up to
  the edge}.
\newblock {\em Communications in Mathematical Physics}, pages 1--24, 2010.

\bibitem{TVUniBulk}
T.~Tao and V.~Vu.
\newblock {Random matrices: Universality of the local eigenvalue statistics}.
\newblock {\em Acta Math.}, 206:127--204, 2011.

\end{thebibliography}

\end{document}